

\documentclass[ECP]{ejpecp} 




\usepackage{tikz}
\usepackage{mhequ}
\usetikzlibrary{arrows, shapes, snakes}
 \usetikzlibrary {arrows.meta} 


\SHORTTITLE{KPZ with rougher than white noise}

\TITLE{Weak coupling limit of KPZ with rougher than white noise}



\AUTHORS{%
  M\'at\'e~Gerencs\'er\footnote{TU Wien, Austria.
    \EMAIL{mate.gerencser@tuwien.ac.at}}\orcid{0000-0002-7276-7054}
  \and 
  Fabio~Toninelli\footnote{TU Wien, Austria. \EMAIL{fabio.toninelli@tuwien.ac.at}} \orcid{0000-0003-1710-8811}}



\KEYWORDS{KPZ equation; weak coupling, regularity structures} 

\AMSSUBJ{60H15; 60L30} 

\SUBMITTED{June 12, 2024} 
\ACCEPTED{March 11, 2025} 




\VOLUME{30}
\YEAR{2025}
\PAPERNUM{34}
\DOI{10.1214/25-ECP675}


\ABSTRACT{We consider the KPZ equation in $1$ spatial dimension with noise that is rougher than white by an exponent $\gamma>1/4$. Under a weak coupling limit, formally removing the nonlinearity from the equation, we show using regularity structures that the renormalised solutions converge to a Gaussian limit that is different from the solution of the linear part of the equation. The regime of this effect has a nontrivial overlap with the subcritical regime $\gamma<1/2$.}


\newcommand{\eps}{\varepsilon}

\tikzset{ 
noise/.style={very thin,shape=rectangle,fill=white,draw=black,inner sep=0pt,minimum size=1.1mm},
intF/.style={very thin,shape=rectangle,fill=black,draw=black,inner sep=0pt,minimum size=1.1mm},
int/.style={very thin,shape=circle,fill=black,draw=black,inner sep=0pt,minimum size=1.2mm},
nod/.style={very thin,shape=circle,fill=black!25!white, draw=black!25!white,inner sep=0pt,minimum size=0.9mm},
HK/.style={draw=black, arrows = {-Stealth[length=6pt, inset=4pt]}, shorten >=0.08cm},
test/.style={line width=0.55mm, draw=black!25!white, arrows = {-Stealth[length=6pt, width=6pt, inset=3pt]}, shorten >=0.05cm},
}

\newcommand{\parr}{\mathop{\mathrm{par}}}
\newcommand{\new}{\mathop{\mathrm{new}}}
\newcommand{\R}{\mathbb{R}}

\newcommand{\Z}{\mathbb{Z}}
\newcommand{\cP}{\mathcal{P}}
\newcommand{\cC}{\mathcal{C}}
\newcommand{\E}{\mathbf{E}}
\newcommand{\T}{\mathbb{T}}
\newcommand{\dd}{\partial}
\newcommand{\one}{\mathbf{1}}
\newcommand{\cD}{\mathcal{D}}
\newcommand{\cI}{\mathcal{I}}
\newcommand{\cR}{\mathcal{R}}
\newcommand{\cT}{\mathcal{T}}
\newcommand{\PPi}{\mathbf{\Pi}}


\begin{document}



\section{Introduction}

In this note we look at the $1+1$-dimensional KPZ equation on the torus $\mathbb T=\mathbb R/\mathbb Z$, but with rougher than space-time white noise:
\begin{equ}\label{eq:KPZ}
(\dd_t -\Delta) h= (\dd_x h)^2 + D^\gamma\xi,\qquad h(0,x)=\psi(x),
\end{equ}
where $\gamma\geq 0$ and $D=(2\pi)^{-1}(-\Delta)^{1/2}$, that is, $D$ is the linear operator that acts in Fourier space as multiplication by $|k|$.
When $\gamma=0$, this is the classical KPZ equation, solved in a very robust sense by Hairer \cite{HairerKPZ}, by what has since became the celebrated theory of regularity structures \cite{H0}.
For $\gamma<1/4$, \eqref{eq:KPZ} can still be solved by regularity structures \cite{Hoshino}.
However, the exponent suggested by scaling is different: the equation is \emph{subcritical} for $\gamma<1/2$. Indeed, formally, if $h$ solves \eqref{eq:KPZ} then using the scaling properties of $D$ and $\xi$, we have that for any $\lambda>0$, $h^\lambda$ defined by $h^\lambda(t,x)=\lambda^{-1/2+\gamma}h(\lambda^2t,\lambda x)$ solves
\begin{equ}
(\dd_t -\Delta) h^\lambda= \lambda^{1/2-\gamma}(\dd_x h^\lambda)^2 + D^\gamma\tilde\xi
\end{equ}
with another space-time white noise $\tilde\xi$. When $\gamma<1/2$, on small scales (that is, $\lambda\to 0$) the nonlinearity vanishes, which is the informal notion of scaling subcriticality.

From the point of view of the theory of regularity structures of \cite{H0}, the threshold $1/4$ is where the so-called \emph{super-regularity} condition \cite[Def.~2.28]{CH} fails.
The main difficulty is that in the first steps of Picard's iteration stochastic objects with infinite variance arise, which can not be treated with the usual renormalisation that removes infinite expectations. 
The same threshold appears and have different interpretation in the It\^o solution theory of the stochastic heat equation driven by rough noise, formally obtained by applying the Cole-Hopf transform $u=e^h$ to \eqref{eq:KPZ} \cite{roughSHE, Mate}.
We are interested in the regime $\gamma>1/4$, where the equation may even be subcritical (if $\gamma\in(1/4,1/2)$), but is beyond the scope of the typical local solution theory provided by regularity structures.

As is already the case for the classical KPZ equation, \eqref{eq:KPZ} is ill-posed as written: since even the solution to the linear equation is not differentiable, one can only expect $\partial_x h$ to exist in a distributional sense, and distributions cannot be squared.
This singular product has to be renormalised by an infinite vertical shift of the interface, which one can formulate via approximations as follows.
Let $\rho:\R\to\R$ be a symmetric, nonnegative, smooth function supported on the ball of radius $1/4$ and integrating to $1$. Define $\rho^\eps(r)=\eps^{-1}\rho(\eps^{-1}r)$, $\varrho^\eps(t,x)=\rho^{\eps^2}(t)\rho^\eps(x)$, and $\xi^\eps=\varrho^\eps\ast\xi$.
The domain of the spatial convolution is understood to be the torus $\T=\R/\Z$, as well as any other spatial domain in the sequel, unless otherwise noted.
Then  \cite{HairerKPZ, Hoshino} considers the equation
\begin{equ}\label{eq:KPZ-class-eps}
(\dd_t -\Delta) h^\eps= (\dd_x h^\eps)^2 -C^\eps + D^\gamma\xi^\eps,\qquad h^\eps(0,x)=\psi(x),\quad x\in\mathbb T
\end{equ}
for $\gamma<1/4$ and $\psi\in \cC^{\theta}(\T)$ for some $\theta>0$, where $C^\eps$ is a some suitable $\eps$-dependent constant (counter-term), 
and show the convergence of the solutions $h^\eps$ as $\eps\to0$ to a nontrivial limit $h$, independent of $\rho$.
{\color{black}Here and below, $\cC^\alpha$ stands for the inhomogeneous H\"older-Besov space $B^\alpha_{\infty,\infty}$, see e.g. \cite[Sec.~3-4]{MW}. It is known that for $\alpha\in(0,1)$ these spaces coincide with the usual spaces of $\alpha$-H\"older continuous functions, which are straightforward to interpret also for functions taking values in any metric space.}

In this work, in the regime $\gamma>1/4$ we consider a different
approximating equation:
\begin{equ}\label{eq:KPZ-eps}
(\dd_t -\Delta) h^\eps= \eps^{2\gamma-1/2}(\dd_x h^\eps)^2 -C^\eps + D^\gamma\xi^\eps,\qquad h^\eps(0,x)=\zeta^\eps(x)+\psi(x).
\end{equ}
The term $\zeta^\eps$, that modifies the initial condition, is defined and explained below.
Note that since $\gamma>1/4$, the factor $ \eps^{2\gamma-1/2}$ formally removes the nonlinearity from the equation in the $\eps\to 0$ limit. For most subcritical equations
this would simply result in $h^\eps$ converging to the solution of the linear equation. Our result is that for $\gamma>1/4$, $h^\eps$ converges to the solution of a \emph{different} linear equation.

The function $\zeta^\eps$ that appears in the initial condition is defined as
\begin{equ}
\zeta^\eps(x)=\int_{(-\infty,0]\times\T} \cP(-s,x-y)D^\gamma\xi^\eps(s,y)\,ds\,dy,
\end{equ}
where $\cP$ denotes the heat kernel.
It is standard that for any $\kappa>0$,  $\zeta^\eps$ converges in $\cC^{1/2-\gamma-\kappa}(\T)$ in probability to a Gaussian random {\color{black}field} $\zeta^0$.
It is also easy to check that the law of $\zeta^\eps$ is stationary for the linear equation
\begin{equation}
(\dd_t -\Delta) X^\eps=  D^\gamma\xi^\eps,
\end{equation} 
obtained by dropping the non-linearity and the counterterm.
This stationarity is convenient for the analysis, but we do not believe it to be essential, see Remark \ref{rem:ini} below.
On the other hand, we do not expect the non-linear equation to have an explicit stationary measure, or that it will be Gaussian.

Note furthermore that for each $\eps>0$ the solution $h^\eps$ to
\eqref{eq:KPZ-eps} exists for all times. Indeed, by the usual Cole-Hopf transform, $\exp(\eps^{2\gamma-1/2} h^\eps)$ solves a linear equation with smooth coefficients, hence exists for all times and stays strictly positive.

\begin{theorem}\label{thm:main}
Let $\kappa,T>0$, $\theta>0$, and let $\psi\in \cC^{\theta}(\T)$. For any $\gamma>1/4$ there exists a nonzero constant $c_{\gamma,\rho}$ and a choice of constants $C^\eps$ such that $h_\eps\to\tilde h$ in law in $C([0,T];\cC^{\theta\wedge(1/2-\gamma)-\kappa}(\T))$ as $\eps\to 0$, where $\tilde h$ is the Gaussian process characterised by the linear equation
\begin{equ}
(\partial_t-\Delta)\tilde h=c_{\gamma,\rho}\tilde\xi+D^\gamma\xi,\qquad \tilde h(0,x)=\zeta^0(x)+\psi(x),
\end{equ}
where $\tilde \xi$ is a space-time white noise independent of $\xi$.
\end{theorem}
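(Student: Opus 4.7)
My plan is to isolate the surviving contribution to the limit as a scaled Wick square of $\d_x X^\eps$ and show that it converges to an independent white noise, while all other nonlinear terms vanish. Decompose $h^\eps = X^\eps + v^\eps$ where $X^\eps$ is the stationary solution of $(\d_t-\Delta)X^\eps = D^\gamma\xi^\eps$ with initial datum $\zeta^\eps$. With the natural choice $C^\eps = \eps^{2\gamma-1/2}\E[(\d_x X^\eps)^2]$ (plus further BPHZ-type counterterms if needed for higher Picard iterates), the equation for $v^\eps$ reads
\begin{equ}
(\d_t-\Delta)v^\eps = Y^\eps + \eps^{2\gamma-1/2}\bigl[2(\d_x X^\eps)(\d_x v^\eps) + (\d_x v^\eps)^2\bigr],\qquad v^\eps(0,\cdot)=\psi,
\end{equ}
with $Y^\eps := \eps^{2\gamma-1/2}\wick{(\d_x X^\eps)^2}$. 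I would then prove (a) the joint convergence $(\xi^\eps, Y^\eps) \to (\xi, c_{\gamma,\rho}\tilde\xi)$ in law, with $\tilde\xi$ an independent space-time white noise, and (b) that the cross and square terms on the right-hand side vanish as $\eps\to0$. Together these give $v^\eps \to \tilde v$ with $(\d_t-\Delta)\tilde v = c_{\gamma,\rho}\tilde\xi$, $\tilde v(0,\cdot)=\psi$, whence $h^\eps \to X + \tilde v = \tilde h$.

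\textbf{The scaled Wick square.} Since $Y^\eps$ lies in the second Wiener chaos of $\xi$, convergence to a Gaussian is controlled by the Nualart--Peccati fourth-moment theorem once the variance is computed. A Wick-isometry calculation yields
\begin{equ}
\Var\langle Y^\eps, f\rangle = 2\eps^{4\gamma-1}\int|\hat f(\tau,k)|^2(\hat K^\eps \ast \hat K^\eps)(\tau,k)\,d\tau\,dk,
\end{equ}
where $\hat K^\eps(\tau,k) = k^2|k|^{2\gamma}|\hat\varrho^\eps(\tau,k)|^2/(\tau^2+k^4)$ is the Fourier symbol of the covariance of $\d_x X^\eps$. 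A UV rescaling $(\tau',k') = (\eps^{-2}\sigma,\eps^{-1}q)$ in the convolution gives $\eps^{4\gamma-1}(\hat K^\eps\ast\hat K^\eps)(\tau,k) \to c_{\gamma,\rho}^2/2$ pointwise, with the limiting constant finite and nonzero precisely for $\gamma\in(1/4,1/2)$: UV integrability requires $\gamma<1/2$, while $\gamma>1/4$ makes the $\eps$-scaling power match that of white noise. Hence $\Var\langle Y^\eps,f\rangle \to c_{\gamma,\rho}^2\|f\|_{L^2}^2$, matching $\Var\langle c_{\gamma,\rho}\tilde\xi,f\rangle$. Independence from $\xi$ follows because $Y^\eps$ is $L^2$-orthogonal to the first chaos, and higher mixed cumulants $\kappa(\xi^\eps,\dots,\xi^\eps,Y^\eps)$ each carry an explicit factor $\eps^{2\gamma-1/2}\to 0$. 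The fourth cumulant of $Y^\eps$, given by a tetrahedron-shaped diagram, vanishes by the same kind of scaling analysis.

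\textbf{Deterministic step and main obstacle.} With the joint stochastic convergence $(\xi^\eps, Y^\eps)\to(\xi,c_{\gamma,\rho}\tilde\xi)$ in hand, one must still show that the cross term $\eps^{2\gamma-1/2}(\d_x X^\eps)(\d_x v^\eps)$ and the square $\eps^{2\gamma-1/2}(\d_x v^\eps)^2$ vanish in an appropriate distributional sense. The square is the easier one: with an a priori regularity $1/2-\gamma-\kappa$ for $v^\eps$, the product $(\d_x v^\eps)^2$ is outside the well-defined regime but is controllable with the $\eps^{2\gamma-1/2}$ prefactor. The cross term is the main difficulty: it is an ill-defined product of distributions of regularity $-1/2-\gamma-\kappa$, and in the regime $\gamma>1/4$ one is beyond the super-regular framework of \cite{CH} where the usual regularity-structures renormalization would handle this. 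The approach has to be quantitative: decomposing via paraproducts and tracking $\eps$-divergence rates, the resonant piece should blow up strictly slower than $\eps^{-(2\gamma-1/2)}$, so the weak-coupling prefactor dominates. Closing this product estimate is the hardest step; once it is done, identifying $\tilde h = X + \tilde v$ and upgrading to convergence in $C([0,T];\cC^{\theta\wedge(1/2-\gamma)-\kappa})$ is routine from continuity of the linear solution map and the fact that $\zeta^\eps \to \zeta^0$ in $\cC^{1/2-\gamma-\kappa}$.
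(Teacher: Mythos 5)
Your overall strategy matches the paper's at a high level: isolate the scaled Wick square of $\partial_x X^\eps$, prove it converges (jointly with $\xi^\eps$) to an independent white noise via the fourth-moment theorem, and show the remaining nonlinear terms vanish. But there are two problems, one factual and one a genuine gap.

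First, the assertion that ``the limiting constant \textup{[is]} finite and nonzero precisely for $\gamma\in(1/4,1/2)$'' because ``UV integrability requires $\gamma<1/2$'' is wrong, and contradicts the statement you are proving, which holds for \emph{all} $\gamma>1/4$. After rescaling, the constant is $c_{\gamma,\rho}^2=\frac{1}{128\pi^6}\int_\R|q|^{4\gamma-2}|\hat\rho(q)|^4\,dq$; the constraint $\gamma>1/4$ is an \emph{infrared} integrability condition at $q=0$, while the ultraviolet end is controlled by the rapid decay of $\hat\rho$ (the mollifier is smooth) for every $\gamma$. There is no upper bound on $\gamma$ in the theorem.

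Second, the ``deterministic step'' is precisely where the paper's main idea lives, and you leave it as a hope (``the resonant piece should blow up strictly slower than $\eps^{-(2\gamma-1/2)}$'') rather than a proof. Your claim that ``in the regime $\gamma>1/4$ one is beyond the super-regular framework of \cite{CH}'' is true for the \emph{unweighted} equation, but it is exactly what the paper circumvents: following the philosophy of \cite{GHM}, the powers of $\eps$ multiplying each nonlinearity and the noise are reassigned as an \emph{improved homogeneity of the noise} (the paper sets $\eta^\eps=\eps^{(2/3)\beta_\gamma-\kappa}D^\gamma\xi^\eps$ with regularity $\mu=-3/2-\gamma_0-3\kappa+(2/3)\beta_{\gamma_0}>-7/4$, and distributes the remaining $\eps$-powers among the coefficients $c^\eps$). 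With this assignment the super-regularity condition \emph{is} satisfied, so the full black-box machinery of \cite{CH,BHZ,BCCH,HS} applies, giving uniform-in-$\eps$ model bounds, convergence of the BPHZ models, and continuity of the solution map in $(\eta,c)$. This is what closes the product estimate for the cross and square terms; a paraproduct decomposition with ad-hoc tracking of divergence rates is not shown to suffice and is not what the paper does. Relatedly, the paper also works with the three-way decomposition $h^\eps=X^\eps+Y^\eps+Z^\eps$, defining $Y^\eps$ as the \emph{solution} of the heat equation with the renormalized Wick-square forcing rather than the forcing itself, which makes the identification of the limits cleaner (your $v^\eps$ corresponds to $Y^\eps+Z^\eps$). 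Finally, minor but worth noting: orthogonality to the first chaos does not by itself give asymptotic independence; the paper establishes joint Gaussianity with vanishing covariance via a diagrammatic check of fourth moments of the \emph{pair}, whereas your appeal to mixed cumulants ``each carrying an explicit factor $\eps^{2\gamma-1/2}$'' is not justified (parity already kills the odd ones, and the even ones require the same careful diagrammatic estimate as the fourth moment of $Y^\eps$).
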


\begin{remark}\label{rem:Martin}
Another natural choice to tame the exploding variance is to put a vanishing constant not in front of the nonlinearity, but rather in front of the noise. This very question was addressed by \cite{HairerVar} during the preparation of the present work, showing (in the case $\gamma=1$) that with the correct prefactor in front of the noise{\color{black},} the solutions converge in law to the solution of the classical (!) KPZ equation driven by the same noise $\tilde \xi$ as above.
\end{remark}

Theorem \ref{thm:main} will follow from Theorem \ref{thm:main2} below.
For the setup of the latter, we introduce some notation.
Set $\beta_\gamma=2\gamma-1/2$.
Take a constant ${\color{black}C_1^\eps}$ and define the processes $X^\eps$ and $Y^\eps$ as
\begin{equs}
X^\eps(t,x)&=\int_{(-\infty,t]\times\T}\cP(t-s,x-y)D^\gamma\xi^\eps(s,y)\,ds\,dy,
\\
Y^\eps(t,x)&=\int_{[0,t]\times\T}\cP(t-s,x-y)\eps^{\beta_\gamma}\big((\dd_x X^\eps(s,y))^2-{\color{black}C^\eps_1}\big)\,ds\,dy.
\end{equs}
Equivalently, they are solutions of the equations
\begin{equs}[eq:system0]
(\dd_t -\Delta)X^\eps &= D^\gamma\xi^\eps,\qquad &X^\eps(0,x)&=\zeta^\eps(x),
\\
(\dd_t -\Delta)Y^\eps &= \eps^{\beta_\gamma}\big((\dd_x X^\eps)^2-{\color{black}C^\eps_1}\big),\qquad &Y^\eps(0,x)&=0,
\end{equs}
respectively.
Then with $ {\color{black}C^\eps_2=C^\eps-\eps^{\beta_\gamma}C^\eps_1}$, the remainder $Z^\eps=h^\eps-X^\eps-Y^\eps$ satisfies
\begin{equs}[eq:Z]
(\dd_t-\Delta)Z^\eps&=\eps^{\beta_\gamma}(\dd_x Z^\eps)^2+2\eps^{\beta_\gamma}(\dd_x Z^\eps)(\partial_x X^\eps)+2\eps^{\beta_\gamma}(\dd_x Z^\eps)(\dd_x Y^\eps)
\\
&\qquad+2\eps^{\beta_\gamma}(\dd_x X^\eps)(\dd_x Y^\eps)+\eps^{\beta_\gamma}(\dd_x Y^\eps)^2
-{\color{black}C^\eps_2},\qquad Z^\eps(0,x)=\psi(x).
\end{equs}
Note that $X^\eps$ is also meaningful for $\eps=0$: indeed, writing equivalently
\begin{equ}
X^\eps(t,x)=D^\gamma\bar X^{\eps}(t,x):=D^\gamma\int_{(-\infty,t]\times\T}(\varrho^\eps\ast\cP_0)(t-s,x-y)\xi(ds,dy)
\end{equ}
(here $\cP_0$ is the projection of $\cP$ to the spatially zero mean functions, note that $D^\gamma\cP_0= D^\gamma \cP$),
the natural limit of $X^\eps$ is $X^0=D^\gamma\bar X^0$, where
\begin{equ}
      \bar X^0(t,x)=\int_{(-\infty,t]\times\T}\cP_0(t-s,x-y)\xi(ds,dy).
\end{equ}
As for $Y^0$, we instead define it as the solution of 
\begin{equ}
Y^0(t,x)=c_{\gamma,\rho}\int_{[0,t]\times\T}\cP(t-s,x-y)\tilde \xi(s,y)\,ds\,dy,
\end{equ}
where $\tilde\xi$ is a space-time white noise independent of $\xi$ and $c_{\gamma,\rho}$ is a constant given by \eqref{eq:beautiful-c} below.

The following then clearly implies Theorem \ref{thm:main}.
\begin{theorem}\label{thm:main2}
Let $\gamma>1/4$, $T>0$, $0<\theta<\theta'<1/4$, $\alpha\in(0,1/2)$.
\begin{enumerate}
\item[(a)] Suppose that $\eps^{\beta_\gamma}\big(\E(\dd_x X^\eps)^2(0,0)-{\color{black}C^\eps_1}\big)\to0$. Then $(X^\eps,Y^\eps)\to(X^0,Y^0)$ as $\eps\to 0$ in law in $C([0,T]; \cC^{\alpha-\gamma}(\T))\times C([0,T];\cC^{\alpha}(\T))$.

\item[(b)] Let $\psi\in\cC^{\theta'}$.
 Then there exists a choice of constants ${\color{black}C_1^\eps}$ satisfying the condition of (a) and a choice of constants ${\color{black}C^\eps_2}$ such that $Z^\eps\to \bar Z$ as $\eps\to 0$ in probability
in $C([0,T];\cC^\theta(\T))$, where $\bar Z$ is the solution of the heat equation with initial condition $\psi$.
\end{enumerate}
\end{theorem}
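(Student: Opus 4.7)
\textbf{Part (a).} The convergence $X^\eps\to X^0$ is direct: both lie in the first Wiener chaos of $\xi$, and a covariance computation together with Gaussian hypercontractivity and Kolmogorov's criterion gives convergence in $C([0,T];\cC^{\alpha-\gamma}(\T))$. For $Y^\eps\to Y^0$, the natural choice is $C_0^\eps:=\E[(\d_x X^\eps)^2(0,0)]$, which makes the forcing
\begin{equ}
F^\eps := \eps^{\beta_\gamma}\bigl((\d_x X^\eps)^2 - C_0^\eps\bigr)
\end{equ}
a centered element of the second Wiener chaos of $\xi$, with covariance $\E[F^\eps(z)F^\eps(z')] = 2\eps^{2\beta_\gamma}K^\eps(z-z')^2$, where $K^\eps(z-z')=\E[\d_x X^\eps(z)\d_x X^\eps(z')]$. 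The essential step is to show, via a self-similar rescaling of $K^\eps$, that $\eps^{2\beta_\gamma}K^\eps(\cdot)^2$ converges as a distribution to $\tfrac12 c_{\gamma,\rho}^2\,\delta$, which simultaneously identifies the explicit constant $c_{\gamma,\rho}$ and the white-noise nature of the limit of $F^\eps$. Independence of $\tilde\xi$ from $\xi$ is automatic from orthogonality of distinct Wiener chaoses. Tightness via hypercontractivity then upgrades this to the stated convergence of $Y^\eps = \cP*F^\eps$ in $C([0,T];\cC^\alpha(\T))$.

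\textbf{Part (b).} With the above choice of $C_0^\eps$, I would set $C_1^\eps:=\eps^{\beta_\gamma}\E[(\d_x Y^\eps)^2(0,0)]$, which exactly cancels the chaos-$0$ component of the source in \eqref{eq:Z} (note that $\E[(\d_x X^\eps)(\d_x Y^\eps)]=0$ since the two chaoses differ in parity). The remaining $Z^\eps$-independent part of the forcing, namely
\begin{equ}
2\eps^{\beta_\gamma}(\d_x X^\eps)(\d_x Y^\eps)+\eps^{\beta_\gamma}\bigl((\d_x Y^\eps)^2-\E(\d_x Y^\eps)^2\bigr),
\end{equ}
lives in chaoses $1,2,3,4$; covariance computations analogous to part (a), with the $\eps^{\beta_\gamma}$ prefactor absorbed against the mollification scale, show that each chaos component converges to $0$ in $C([0,T];\cC^{\theta-2-\kappa}(\T))$. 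Recasting \eqref{eq:Z} in mild form, I would then run a Banach fixed point in $C([0,T];\cC^\theta(\T))$, decomposing the $Z^\eps$-dependent bilinear products \`a la Bony. The non-resonant paraproduct parts (e.g.\ $\eps^{\beta_\gamma}\,\d_x Z^\eps\prec\d_x X^\eps$) admit classical bounds; the resonant parts require a reconstruction/commutator-type argument tailored to the Wiener chaos structure of $X^\eps,Y^\eps$, exploiting both the chaos regularity gain and the $\eps^{\beta_\gamma}$ prefactor to produce $\mathrm{o}(1)$ contributions in the relevant norm. A Gronwall step then yields $Z^\eps\to\bar Z$ in probability.

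\textbf{Main obstacle.} The central difficulty is the resonance of the $Z^\eps$-dependent products in \eqref{eq:Z}, for instance $\eps^{\beta_\gamma}\,\d_x Z^\eps\circ\d_x X^\eps$. A direct application of Bony's theorem would need the sum of H\"older regularities of the two factors to be positive, but $(\theta-1)+(\alpha-\gamma-1)$ is strictly negative in our regime (indeed it is bounded above by $-5/4-\gamma$). One must therefore exploit the probabilistic structure (Wiener chaos regularity of $X^\eps$) together with a careful tracking of how the $\eps^{\beta_\gamma}$ prefactor balances the blow-up of $\d_x X^\eps$ and $\d_x Y^\eps$ on scales of order $\eps$; this delicate balance is precisely what singles out the exponent $\beta_\gamma=2\gamma-1/2$.
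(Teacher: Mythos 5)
Your part (a) sketch correctly reduces to the covariance limit $\eps^{2\beta_\gamma}K^\eps(\cdot)^2\to\tfrac12 c_{\gamma,\rho}^2\delta$, which is exactly the computation the paper performs in Fourier (the measures $\mu^\eps_k$). However, there is a genuine gap: convergence of the covariance of a sequence in the second Wiener chaos does \emph{not} by itself imply that the limit is Gaussian, nor does uncorrelatedness with $\xi^\eps(\psi)$ imply independence in the limit, unless joint Gaussianity of the limit is established. A chi-square-type limit is a priori possible. The paper closes this gap by appealing to the Nualart--Peccati fourth moment theorem applied to $\Phi^\eps=\xi^\eps(\psi)+\tilde\xi^\eps(\varphi)$, which requires a separate computation of $\E(\Phi^\eps)^4$ (done via the graphical/Wick calculus). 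Your proposal omits any argument for Gaussianity and joint Gaussianity of the limiting pair, and this step cannot be skipped.

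For part (b) you take a genuinely different route: a paracontrolled/Bony fixed-point analysis, whereas the paper reformulates \eqref{eq:Z} as a system \eqref{eq:bigsystem} driven by a noise $\eta^\eps=\eps^{(2/3)\beta_\gamma-\kappa}D^\gamma\xi^\eps$ of improved homogeneity $\mu>-7/4$, so that the super-regularity condition of \cite{CH} holds and the black-box convergence of BPHZ models \cite{CH,HS,BCCH} applies directly. Two concrete concerns with your route. First, you propose $C_1^\eps=\eps^{\beta_\gamma}\E[(\d_x Y^\eps)^2(0,0)]$, which only removes the chaos-$0$ part of the $Z$-independent source; the BPHZ prescription in the paper's setup sums the renormalisation character over \emph{all} negative-homogeneity trees, and it is not obvious a priori that the contributions beyond the one you wrote down vanish or are already accounted for (the paper has to argue this via the spatial antisymmetry of $\partial_x K$ and the structure of $T_0$). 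Second, and more importantly, your treatment of the resonant products $\eps^{\beta_\gamma}\d_x Z^\eps\circ\d_x X^\eps$ etc.\ is only a statement of intent: it requires a concrete paracontrolled ansatz for $Z^\eps$ and commutator estimates, and for $\gamma$ close to $1/2$ (and a fortiori for $\gamma\ge 1/2$, which the paper handles by capping at $\gamma_0=\gamma\wedge 1$) a first-order paracontrolled expansion would not suffice. The paper's regularity-structures route systematically absorbs exactly this book-keeping; if you want to pursue a paracontrolled proof you need to spell out the ansatz and verify that the $\eps^{\beta_\gamma}$ prefactor indeed gives the required smallness uniformly over the range of $\gamma$, which is far from automatic.
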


\section{Proof of Theorem \ref{thm:main2}}
\subsection{Proof of (a)}\label{sec:proof(a)}

Without loss of generality one may assume $\E(\dd_x X^\eps)^2(0,0)={\color{black}C^\eps_1}$. Also, it suffices to show the convergence $(\bar X^\eps,Y^\eps)\to(\bar X^0,Y^0)$ in law in $\big(C([0,T],\cC^{\alpha})\big)^2$.
To do so, there are two tasks to be done. First, one needs to show tightness of $(\bar X^\eps,Y^\eps)$ in $\big(C([0,T],\cC^{\alpha})\big)^2$. Second, one needs to identify the limit. This is actually more convenient to do on the level of the right-hand sides of the equations that $\bar X^\eps$ and $Y^\eps$ solve: 
defining
\begin{equ}
(\xi^\eps,\tilde \xi^\eps):=\Big(\xi^\eps,\eps^{\beta_\gamma}\big((\partial_x X^\eps)^2-\E(\partial_x X^\eps)^2\big)\Big),
\end{equ}
it suffices to show that any finite dimensional marginal of any subsequential limit of $(\xi^\eps,\tilde \xi^\eps)$ coincides with that of $(\xi,\tilde \xi)$.

We start with the latter task.
Since the claimed limit is Gaussian, it suffices to show that for any two test functions $\psi,\varphi\in C_c^\infty(\R\times\T)$ one has
\begin{equ}
\Phi^\eps:=\xi^\eps(\psi)+\tilde\xi^\eps(\varphi)\,\,\overset{\mathrm{law}}{\to}\,\,\xi(\psi)+\tilde\xi(\varphi)=:\Phi\sim N\big(0,\|\psi\|_{L^2}^2+c_{\gamma,\rho}^2\|\varphi\|_{L^2}^2\big),
\end{equ}
where $c_{\gamma,\rho}>0$ is to be determined.
Consider the Fourier transform in the spatial variable, henceforth denoted by $\mathcal{F}(\cdot)$ or $\hat{\cdot}$.
Without loss of generality one may assume that $\hat\psi$ and $\hat\varphi$ have compact support in $\R\times\Z$.
Since $\Phi^\eps$ are centered random variables in the second inhomogenous Wiener chaos, by the fourth moment theorem of Nualart and Peccati \cite{Nualart2005} it suffices to check that the variance and the fourth moment of $\Phi^\eps$ converge to those of $\Phi$.

Concerning the variance, it is clear that $\E(\xi^\eps(\psi))^2\to\|\psi\|_{L^2}^2$ and that $\E\big(\xi^\eps(\psi)\tilde\xi^\eps(\varphi)\big)=0$ by the orthogonality of the first and second homogeneous chaos. To show the convergence of $\E(\tilde\xi^\eps(\varphi))^2$, denote by $\varphi^\eps$ the temporal convolution of $\varphi$ with $\rho^{\eps^2}$ and recall that  $(W^k_t)_{t\in\R}:=\xi\big((s,y)\mapsto\mathbf{1}_{[0,t]}(s)e^{-2\pi ik y}\big)$ are complex standard Brownian motions that are uncorrelated unless $k+k'=0$
(here, for $t<0$, we understand $\mathbf{1}_{[0,t]}:=-\mathbf{1}_{[t,0]}$).
With this notation one has
 \begin{equ}
\partial_x\hat X^\eps(t,k)=ik|k|^\gamma\hat\rho^\eps(k)\int_{-\infty}^te^{-4\pi^2(t-s)k^2}\,dW^k_s,
\end{equ}
and therefore
\begin{equs}
  \tilde\xi^\eps(\varphi)&=-\eps^{\beta_\gamma}\sum_{k\in\Z}\sum_{\ell+m+k=0}\mathbf{1}_{\ell,m\neq 0}\ell m|\ell|^\gamma|m|^\gamma\hat\rho^\eps(\ell)\hat\rho^\eps(m)
  \label{eq:formulaforxi}
\\
&\qquad\times\int_\R \hat\varphi^\eps(t,k)\Big(\int_{-\infty}^te^{-4\pi^2(t-r)\ell^2}\,dW^\ell_r\int_{-\infty}^te^{-4\pi^2(t-u)m^2}\,dW^m_u
-\one_{k=0}\frac{1}{8\pi^2\ell^2}\Big)\,dt.
\end{equs}
Note that the $k=0$ term includes the renormalisation
\begin{equ}
\frac{1}{8\pi^2\ell^2}=\E\Big(\int_{-\infty}^te^{-4\pi^2(t-r)\ell^2}\,dW^\ell_r\int_{-\infty}^te^{-4\pi^2(t-u)m^2}\,dW^m_u\Big),
\end{equ}
where $\ell+m=0$, $\ell,m\neq0$.
By Wick's formula and the isometry of stochastic integrals one has
\begin{equs}
\E (\tilde\xi^\eps(\varphi))^2
&=2\eps^{2\beta_\gamma}\sum_{k\in\Z}\sum_{\ell+m+k=0}|\ell|^{2+2\gamma}|m|^{2+2\gamma}\big|\hat\rho^\eps(\ell)\hat\rho^\eps(m)\big|^2
\\
&\quad\times\int_\R\int_\R \hat\varphi^\eps(t,k)\hat\varphi^\eps(t',-k)
\\
&\qquad\times
\int_{-\infty}^{t\wedge t'} e^{-4\pi^2(t+t'-2r)\ell^2}\,dr\int_{-\infty}^{t\wedge t'} e^{-4\pi^2(t+t'-2u)m^2}\,du\,dt\, dt'
\\
&=\frac{1}{32\pi^4}\eps^{2\beta_\gamma}\sum_{k\in\Z}\sum_{\ell+m+k=0}|\ell|^{2\gamma}|m|^{2\gamma}\big|\hat\rho^\eps(\ell)\hat\rho^\eps(m)\big|^2
\\
&\quad\times\int_\R\int_\R \hat\varphi^\eps(t,k)\hat\varphi^\eps(t',-k)e^{-4\pi^2|t-t'|(\ell^2+m^2)}\,dt\,dt'
\\
&=\frac{1}{32\pi^4}\sum_{k\in\Z}\eps \sum_{\substack{\ell,m\in\eps\Z\\\ell+m+\eps k=0}}|\ell|^{2\gamma}|m|^{2\gamma}\big|\hat\rho(\ell)\hat\rho(m)\big|^2
\\
&\quad\times\int_\R\int_\R \hat\varphi^\eps(t,k)\hat\varphi^\eps(t',-k)\frac{e^{-4\pi^2|t-t'|\frac{\ell^2+m^2}{\eps^2}}}{\eps^2}
\,dt\,dt',\label{eq:varrr}
\end{equs}
performing the change of variables $\ell\mapsto\eps\ell$, $m\mapsto \eps m$ in the last step and using the definition of $\beta_\gamma=2\gamma-1/2$.
For any $k$ consider the measures {\color{black}over $\R$}
\begin{equ}
\mu^\eps_k(d\tau)=\frac{1}{32\pi^4}\eps\sum_{\ell+m+\eps k=0}\frac{|\ell|^{2\gamma}|m|^{2\gamma}}{\ell^2+m^2}
\big|\hat\rho(\ell)\hat\rho(m)\big|^2
e^{-4\pi^2|\tau|\frac{\ell^2+m^2}{\eps^2}}\frac{\ell^2+m^2}{\eps^2}d\tau.
\end{equ}
The total mass of $\mu^\eps_k$ converges to 
\begin{equ}\label{eq:beautiful-c}
c_{\gamma,\rho}^2:=\frac{1}{128\pi^6}\int_\R|q|^{4\gamma-2}|\hat\rho(q)|^4\,dq,
\end{equ}
which is finite since $\gamma>1/4$ and $\rho$ is smooth. On the other hand, by virtue of $e^{-|x|}\lesssim |x|^{-N}$ for any $N>0$, one easily sees that $\mu^\eps_k([a,b])\to 0$ for any $a>0$ or $b<0$. Thus $\mu^\eps_k\to  c_{\gamma,\rho}^2\delta_0$.
Recalling that for our choice of $\varphi$, the sum over $k$ is finite, the limit in $\eps$ and the sum in $k$ is trivially interchangable, and thus from \eqref{eq:varrr} one gets
\begin{equ}
\lim_{\eps\to 0}\E (\tilde\xi^\eps(\varphi))^2=c_{\gamma,\rho}^2\sum_{k\in\Z}\int_\R |\hat\varphi(t,k)|^2\,dt=c_{\gamma,\rho}^2\|\varphi\|_{L^2}^2
\end{equ}
as desired.

We remark (for later use) that the total mass of $\mu^\eps_k$ is in fact bounded uniformly in $k$, $\eps$, since 
\begin{equs}
\mu^\eps_k(\R)&\lesssim \eps\sum_{\substack{\ell+m+\eps k=0\\|\ell|\geq m}}\frac{|\ell|^{2\gamma}|m|^{2\gamma}}{\ell^2+m^2}\big|\hat\rho(\ell)\hat\rho(m)\big|^2
\lesssim
\eps\sum_{\ell\in\eps\Z}|\ell|^{4\gamma-2}\big|\hat\rho(\ell)\big|^2
\lesssim \int_\R|q|^{4\gamma-2}|\hat\rho(q)|^2\,dq.
\end{equs}
Therefore from \eqref{eq:varrr} one has
\begin{equ}\label{eq:hopefully-true}
\E (\tilde\xi^\eps(\varphi))^2\lesssim \sum_{k\in\Z}\int_\R\varphi^\eps(t,k)(\mu^\eps_k\ast\varphi^\eps)(t,-k)\,dt\lesssim
\|\varphi\|_{L^2}^2.
\end{equ}

Next we aim to show that
\begin{equ}
\E(\Phi^\eps)^4\to\E(\Phi)^4=3(\E(\Phi)^2)^2=3\|\psi\|_{L^2}^4+6c_{\gamma,\rho}^2\|\psi\|_{L^2}^2\|\varphi\|_{L^2}^2+3c_{\gamma,\rho}^4\|\varphi\|_{L^2}^4.
\end{equ}
Expanding the power
\begin{equ}
(\Phi^\eps)^4=(\xi^\eps(\psi))^4+4(\xi^\eps(\psi))^3\tilde\xi^\eps(\varphi)
+6(\xi^\eps(\psi))^2(\tilde\xi^\eps(\varphi))^2
+
4\xi^\eps(\psi)(\tilde\xi^\eps(\varphi))^3
+(\tilde \xi^\eps(\varphi))^4,
\end{equ}
all but two terms are trivial: one has $\E(\xi^\eps(\psi))^4\to3\|\psi\|_{L^2}^4$ from Gaussianity, as well as $\E\big(\xi^\eps(\psi)(\tilde\xi^\eps(\varphi))^3\big)=\E\big((\xi^\eps(\psi))^3\tilde\xi^\eps(\varphi)\big)=0$ from the orthogonality of even and odd homogeneous chaoses.

For the remaining two terms
let us introduce some graphical notation for certain random variables and their expectations. The diagrams contain vertices of three types (\begin{tikzpicture}[baseline=-3pt]\draw (0,0) node[noise]{};\end{tikzpicture}$\,$, \begin{tikzpicture}[baseline=-3pt]\draw (0,0)node[intF]{};\end{tikzpicture}$\,$ and \begin{tikzpicture}[baseline=-3pt]\draw (0,0) node[nod]{};\end{tikzpicture}$\,$) and directed edges.
Each vertex of type \begin{tikzpicture}[baseline=-3pt]\draw (0,0) node[noise]{};\end{tikzpicture} and \begin{tikzpicture}[baseline=-3pt]\draw (0,0)node[intF]{};\end{tikzpicture} is assigned a variable $t\in\R$. At vertices of type \begin{tikzpicture}[baseline=-3pt]\draw (0,0) node[nod]{};\end{tikzpicture} we fix the corresponding variable to be $0$.
Furthermore, each edge is directed and is assigned a variable $k\in\Z$, with the constraint that at each vertex of type \begin{tikzpicture}[baseline=-3pt]\draw (0,0)node[intF]{};\end{tikzpicture} the sum of the variables of all incoming edges equals  the sum of the variables of all outcoming edges (``conservation of momentum''). Each edge (directed from vertex $v$ to $w$) corresponds to a function $g:\R\times\Z\to\mathbb{C}$ that is evaluated at the $k$ coordinate of the edge and at the difference of the $t$ coordinates of vertex $w$ and that of vertex $v$.
There are two types of edges: 
\begin{tikzpicture}[baseline=-3pt]
\draw[HK] (0,0)--(0.5,0);
\end{tikzpicture}
corresponds to $g(t,k):=e^{-4\pi^2 tk^2}\one_{t\geq0} ik|k|^\gamma\hat\rho^\eps(k)$,
while
\begin{tikzpicture}[baseline=-3pt]
\draw[test] (0,0)--(0.5,0);
\end{tikzpicture}
corresponds to either $g(t,k)=\hat\varphi^\eps(t,-k)$ or $g(t,k)=\hat \psi^\eps(t,-k)$.
Crucially, the second type of edge is nonzero only for finitely many $k$-s.
Vertices \begin{tikzpicture}[baseline=-3pt]\draw (0,0) node[noise]{};\end{tikzpicture} always have degree $1$, and to them is attached the  stochastic integral $dW^k_t$, where $k$ is that of the unique edge connected to that vertex.
The value of a diagram is obtained by integrating/summing all the variables.

For example,
\begin{equ}
\xi^\eps(\psi)=
\begin{tikzpicture}[baseline=8pt]
\draw[test](0,0.8)-- (0,0)node[nod]{};
\draw (0,0.8) node[noise]{};
\end{tikzpicture},
\qquad
\tilde\xi^\eps(\varphi)=
\eps^{\beta_\gamma}\left(\,
\begin{tikzpicture}[baseline=10pt]
\draw[test] (0,0.65)-- (0,0)node[nod]{};
\draw[HK] (-0.35,1)--(0,0.65) ;
\draw (-0.35,1) node[noise]{};
\draw[HK] (0.35,1)--(0,0.65)node[intF]{};
\draw (0.35,1)node[noise]{};
\end{tikzpicture}
-\,\,
\begin{tikzpicture}[baseline=10pt]
\draw[test] (0,0.65)--(0,0)node[nod]{};
\draw[HK] (0,1.1) to [out=210, in=150, looseness=2] (0,0.65);
\draw[HK] (0,1.1) to [out=-30, in=30, looseness=2] (0,0.65);
\draw (0,0.65)node[intF]{};
\draw (0,1.1)node[intF]{};
\end{tikzpicture}\,\right),
\qquad
\E(\tilde\xi^\eps(\varphi))^2=2\eps^{2\beta_\gamma}
\begin{tikzpicture}[baseline=10pt]
\draw[test] (0,0.65)--(0,0)node[nod]{};
\draw[test] (1,0.65)--(1,0)node[nod]{};
\draw[HK] (1,1.1) to [in=45, out=180, looseness=1] (0,0.65);
\draw[HK] (1,1.1) to [out=0, in=45, looseness=2] (1,0.65);
\draw[HK]  (0,1.1) to [in=135, out=0, looseness=1] (1,0.65);
\draw[HK] (0,1.1) to [out=180, in=135, looseness=2] (0,0.65);
\draw (1,1.1)node[intF]{};
\draw (0,1.1)node[intF]{};
\draw (0,0.65) node[intF]{};
\draw (1,0.65) node[intF]{} ;
\end{tikzpicture}\,.
\end{equ}
The reader is invited to compare the graphical representation of $\tilde\xi^\eps({\color{black}\varphi})$ (middle drawing) with its formula \eqref{eq:formulaforxi}.
{\color{black}Similarly, one can compare the last drawing with the first equality in \eqref{eq:varrr}.}
Then by Wick's formula
\begin{equs}
\E&\big((\xi^\eps(\psi))^2(\tilde\xi^\eps(\varphi))^2\big)
=
2\eps^{2\beta_\gamma}
\begin{tikzpicture}[baseline=10pt]
\draw[test]  (-1,1) to [in=105,out=210,looseness=1] (-1.2,0)node[nod]{};
\draw[test] (-1,1) node[intF]{} to [out=-30,in=75,looseness=1](-0.8,0)node[nod]{};
\draw[test] (0,0.65)--(0,0)node[nod]{};
\draw[test] (1,0.65)--(1,0)node[nod]{};
\draw[HK] (1,1.1) to [in=45, out=180, looseness=1] (0,0.65);
\draw[HK] (1,1.1) to [out=0, in=45, looseness=2] (1,0.65);
\draw[HK]  (0,1.1) to [in=135, out=0, looseness=1] (1,0.65);
\draw[HK] (0,1.1) to [out=180, in=135, looseness=2] (0,0.65);
\draw (1,1.1)node[intF]{};
\draw (0,1.1)node[intF]{};
\draw (0,0.65) node[intF]{};
\draw (1,0.65) node[intF]{} ;
\end{tikzpicture}
+
8\eps^{2\beta_\gamma}
\begin{tikzpicture}[baseline=10pt]
\draw[test] (0,1.1)node[intF]{} to[in=90,out=180,looseness=1.2] (-1.2,0)node[nod]{} ;
\draw[test] (-0.4,0.9) to[in=90,out=180,looseness=1] (-0.8,0)node[nod]{};
\draw[HK]  (-0.4,0.9) node[intF]{}  to[out=0,in=135,looseness=1] (0,0.65)node[int]{};
\draw[test] (0,0.65)--(0,0)node[nod]{};
\draw[test] (1,0.65)--(1,0)node[nod]{};
\draw[HK] (1,1.1) to [in=45, out=180, looseness=1] (0,0.65);
\draw[HK] (1,1.1) to [out=0, in=45, looseness=2] (1,0.65);
\draw[HK]  (0,1.1) to [in=135, out=0, looseness=1] (1,0.65);
\draw (1,1.1)node[intF]{};
\draw (0,1.1)node[intF]{};
\draw (0,0.65) node[intF]{};
\draw (1,0.65) node[intF]{} ;
\end{tikzpicture}\,.
\end{equs}
The first term is simply $\E\big((\xi^\eps(\psi))^2\big)\E\big((\tilde\xi^\eps(\varphi))^2\big)$, which converges to $c_{\gamma,\rho}^2\|\psi\|_{L^2}^2\|\varphi\|_{L^2}^2$.
 As for the second diagram, notice that the sum over the $k$ variable collapses to a finite sum.
This is due to the fact that $\psi,\varphi$ have finitely many non-zero Fourier modes, and to the conservation rule of momentum at vertices of the diagram. As for the integration on the time variables in the diagram, recall that the function $g$ associated to the lines decays exponentially in the time difference between its endpoints, uniformly in $\eps$. From this fact, one easily sees that the value of this diagram is $\eps^{2\beta_\gamma}$ times a quantity uniformly bounded in $\eps$ and since $\beta_\gamma>0$ this diagram tends to zero.  

Similarly, one has
\begin{equs}
\E(\tilde \xi^\eps(\varphi))^4
&=12\eps^{4\beta_\gamma}
\begin{tikzpicture}[baseline=10pt]
\draw[test] (0,0.65)--(0,0)node[nod]{};
\draw[test] (1,0.65)--(1,0)node[nod]{};
\draw[HK] (1,1.1) to [in=45, out=180, looseness=1] (0,0.65);
\draw[HK] (1,1.1) to [out=0, in=45, looseness=2] (1,0.65);
\draw[HK]  (0,1.1) to [in=135, out=0, looseness=1] (1,0.65);
\draw[HK] (0,1.1) to [out=180, in=135, looseness=2] (0,0.65);
\draw (1,1.1)node[intF]{};
\draw (0,1.1)node[intF]{};
\draw (0,0.65) node[intF]{};
\draw (1,0.65) node[intF]{} ;
\draw[test] (2,0.65)--(2,0)node[nod]{};
\draw[test] (3,0.65)--(3,0)node[nod]{};
\draw[HK] (3,1.1) to [in=45, out=180, looseness=1] (2,0.65);
\draw[HK] (3,1.1) to [out=0, in=45, looseness=2] (3,0.65);
\draw[HK]  (2,1.1) to [in=135, out=0, looseness=1] (3,0.65);
\draw[HK] (2,1.1) to [out=180, in=135, looseness=2] (2,0.65);
\draw (3,1.1)node[intF]{};
\draw (2,1.1)node[intF]{};
\draw (2,0.65) node[intF]{};
\draw (3,0.65) node[intF]{} ;
\end{tikzpicture}
+48\eps^{4\beta_\gamma}
\begin{tikzpicture}[baseline=10pt]
\draw[test] (0,0.65)--(0,0)node[nod]{};
\draw (0.3,0.4) node{\tiny $k_1$};
\draw (1.3,0.4) node{\tiny $k_2$};
\draw (2.3,0.4) node{\tiny $k_3$};
\draw (3.3,0.4) node{\tiny $k_4$};
\draw[test] (1,0.65)--(1,0)node[nod]{};
\draw[test] (2,0.65)--(2,0)node[nod]{};
\draw[test] (3,0.65)--(3,0)node[nod]{};
\draw[HK] (0.5,0.9) to[in=45,out=180,looseness=1](0,0.65) node[intF]{}; 
\draw[HK](0.5,0.9)node[intF]{} to[out=0,in=135] (1,0.65) node[intF]{};
\draw[HK] (1.5,0.9) to[in=45,out=180,looseness=1] (1,0.65)node[intF]{}; 
\draw[HK](1.5,0.9)node[intF]{} to[out=0,in=135] (2,0.65)node[intF]{};
\draw[HK] (2.5,0.9) to[in=45,out=180,looseness=1] (2,0.65)node[intF]{}; 
\draw[HK](2.5,0.9)node[intF]{} to[out=0,in=135] (3,0.65)node[intF]{};
\draw[HK] (1.5,1.2) to[in=135, out=180, looseness=1] (0,0.65)node[intF]{}; 
\draw[HK] (1.5,1.2)node[intF]{} to[out=0,in=45,looseness=1] (3,0.65)node[intF]{};
\end{tikzpicture}\,.
\end{equs}
The first term is simply $3(\E(\tilde\xi^\eps(\varphi))^2)^2$, which converges to $3c_{\gamma,\rho}^4\|\varphi\|_{L^2}^4$.
In the second diagram we indicated the $k$ variables of the
\begin{tikzpicture}[baseline=-3pt]
\draw[test] (0,0)--(0.5,0);
\end{tikzpicture}
edges,
observe that once again the sum over $k_1,\dots,k_4$ contains only finitely many terms (because $\varphi$ has finite Fourier support).
It is clear that the momentum of one remaining edge, say $\ell$, uniquely determines all the others because of conservation of momentum.
Recalling the form of the functions associated to the edges, it is easy to see that the second term above is upper bounded by a constant
times 
  \begin{equ}
    \eps^{4\beta_\gamma}\sum_\ell |\hat \rho^\eps(\ell)|^8 |\ell|^{8(\gamma+1)}\times \frac1{\ell^8}\times \frac1{\ell^6}.
  \end{equ}
  In fact, the term $|\ell|^{8(\gamma+1)}$ comes from prefactors in the function $g$ associated to the eight lines of type 
\begin{tikzpicture}[baseline=-3pt]
\draw[HK] (0,0)--(0.5,0);
\end{tikzpicture},
the factor $\ell^{-8}$ comes from the integrals with respect of the time variable of  the four black vertices not connected to a \begin{tikzpicture}[baseline=-3pt]
\draw[test] (0,0)--(0.5,0);
\end{tikzpicture} line and the factor
  $\ell^{-6}$ from the integral over the three time differences between the four vertices connected to a \begin{tikzpicture}[baseline=-3pt]
\draw[test] (0,0)--(0.5,0);
\end{tikzpicture} line.
  Since the cut-off function $\hat\rho^\eps(\ell)$ effectively restricts the sum to values $|\ell|\lesssim \eps^{-1}$, it is easily seen that the value of the term is of order $\eps^3$ and in particular tends to zero.

It remains to show tightness, which one may do component-wise.
Let $(\chi_j)_{j\geq-1}$ form a smooth dyadic partition of unity: that is, $\chi_{-1}$ is smooth and supported on the ball of radius $1$, and $\chi_0$ is smooth and supported on the annulus between radii $1/2$ and $2$, for all $a\in\R$ and $j\geq 1$ one has $\chi_j(a)=\chi_0(2^{-j}a)$ and for all $a\in \R$, $\sum_{j\geq -1}{\color{black}\chi_j( a)}=1$.
We denote $\Delta_{j}f=\mathcal{F}^{-1}(\chi_j\hat f)$.
One then has the following tighness criterion for random fields. It follows from e.g. {\color{black}\cite[Lem.~9-10]{MW}}, Nelson's estimate, and Besov embeddings.
\begin{lemma}\label{lem:kolmogorov}
Let $k\geq 0$ and $(V^\eps)_{\eps\in(0,1]}$ be a family of continuous random processes indexed by $t\in[0,T]$ and $x\in\T$, such that they belong to the $k$-th nonhomogeneous Wiener chaos. Suppose that there exist constants $a>0$, $b\in \R$, and $C<\infty$ such that for all $0\leq s<t\leq T$ and all $j\geq -1$ one has
\begin{equ}\label{eq:kolmogorov}
\sup_{\eps\in(0,1]}\sup_{x\in\T}\E \Big(\Delta_j\big(V^\eps(t,\cdot)-V^\eps(s,\cdot)\big)(x)\Big)^2\leq C |t-s|^{2a} 2^{-2jb}.
\end{equ}
Then for all $\kappa>0$, $(V^\eps)_{\eps\in(0,1]}$ is tight in $\cC^{a-\kappa}([0,T]; \cC^{b-\kappa}(\T))$.
\end{lemma}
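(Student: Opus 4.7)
The plan is to combine three standard tools: Nelson's hypercontractive estimate (to upgrade the hypothesised second moment bound to moments of arbitrarily high order), a Besov-type Kolmogorov criterion in space, and a Kolmogorov criterion in time, concluded by a compact embedding argument.

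First, since $\Delta_j\big(V^\eps(t,\cdot)-V^\eps(s,\cdot)\big)(x)$ lies in the inhomogeneous Wiener chaos of order at most $k$, Nelson's estimate promotes \eqref{eq:kolmogorov} to
\begin{equ}
\sup_{\eps\in(0,1]}\sup_{x\in\T}\E\big|\Delta_j\big(V^\eps(t,\cdot)-V^\eps(s,\cdot)\big)(x)\big|^{2p}\leq C_{p,k}|t-s|^{2pa}2^{-2pjb}
\end{equ}
for every $p\geq 1$, with $C_{p,k}$ growing only polynomially in $p$.

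Next I would integrate in $x\in\T$ and sum a dyadically-weighted version of the inequality over $j\geq -1$; by the definition of the Besov norm this yields, for any fixed $\delta\in(0,\kappa)$,
\begin{equ}
\sup_\eps\E\big\|V^\eps(t,\cdot)-V^\eps(s,\cdot)\big\|_{B^{b-\delta}_{2p,2p}(\T)}^{2p}\leq C'_{p,k}|t-s|^{2pa}.
\end{equ}
For $p$ large, the Besov embedding $B^{b-\delta}_{2p,2p}(\T)\hookrightarrow\cC^{b-\kappa/2}(\T)$, which is essentially the content of \cite[Lem.~5.2]{MW}, converts the left-hand side into the same estimate with the $\cC^{b-\kappa/2}$-norm. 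Applying a Kolmogorov continuity criterion in the style of \cite[Lem.~5.3]{MW} to the Banach-space-valued process $t\mapsto V^\eps(t,\cdot)\in\cC^{b-\kappa/2}(\T)$, with $p$ chosen so large that $a-1/(2p)>a-\kappa/2$, gives a uniform-in-$\eps$ moment bound for $\|V^\eps\|_{\cC^{a-\kappa/2}([0,T];\cC^{b-\kappa/2}(\T))}$.

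Tightness in $\cC^{a-\kappa}([0,T];\cC^{b-\kappa}(\T))$ then follows from Markov's inequality together with the fact that bounded sets in $\cC^{a-\kappa/2}([0,T];\cC^{b-\kappa/2}(\T))$ are relatively compact in $\cC^{a-\kappa}([0,T];\cC^{b-\kappa}(\T))$. The only real point of care is that the Besov embedding parameter $\delta$ and the exponent $1/(2p)$ lost in time-Kolmogorov must both be pushed below $\kappa/2$; this is harmless because the Nelson constant $C_{p,k}$ grows only polynomially in $p$, so there is no obstruction to taking $p$ arbitrarily large. No genuine difficulty arises beyond bookkeeping, which is precisely why the result is invoked as folklore rather than proved in the paper.
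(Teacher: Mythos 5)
Your proposal is correct and follows exactly the route the paper indicates (Nelson's estimate, \cite[Lem.~5.2--5.3]{MW}, Besov embedding, and a compactness argument), so it is essentially the same proof. One small slip: the hypercontractivity constant for the $k$-th Wiener chaos is $C_{p,k}=(2p-1)^{kp}$, which grows super-polynomially in $p$; but this is irrelevant here, since you only need a single \emph{fixed} $p$ large enough that $1/(2p)$ and $\delta$ both fall below $\kappa/2$, not $p\to\infty$.
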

The tightness of $\bar X^\eps$ is very well-known, let us give the proof anyway.
To show \eqref{eq:kolmogorov} with $V^\eps=\bar X^\eps$, fix $s,t,x,j$, and consider the function
\begin{equ}
\varphi(r,y)=\Delta_j\big(\cP_0(t-r,\cdot)-\cP_0(s-r,\cdot)\big)(x-{\color{black}y}).
\end{equ}
Then
\begin{equ}
\E \Big(\Delta_j\big(\bar X^\eps(t,\cdot)-\bar X^\eps(s,\cdot)\big)(x)\Big)^2=\E(\xi^\eps(\varphi))^2=\|\varrho^\eps\ast\varphi\|_{L^2}^2\leq\|\varphi\|_{L^2}^2.
\end{equ}
One writes (using the elementary inequalites $e^{-4\pi^2x}\leq e^{-x}$, $e^{-4\pi^2x}-e^{-4\pi^2y}\leq e^{-x}-e^{-y}$ for $0\leq x\leq y$, to save some space)
\begin{equs}
\|\varphi\|_{L^2}^2&=\int_s^t\|\Delta_j \cP_0(t-r,\cdot)\|_{L^2(\T)}^2\,dr
    +\int_{-\infty}^s\|\Delta_j \big(\cP_0(t-r,\cdot)-\cP_0(s-r,\cdot)\big)(\cdot)\|_{L^2(\T)}^2\,dr
    \\&\leq\int_{s}^{t}\sum_{k\in\Z}(\chi_{j}(k))^2 e^{-(t-r)k^{2}}dr + \int_{-\infty}^{s}\sum_{k\in\Z}(\chi_{j}(k))^2 e^{-(s-r)k^2}(1-e^{(t-s)k^2})^2dr.
\end{equs}
Using that for any $\theta\geq 0$, $e^{-x}\lesssim x^{-\theta}$ uniformly over $x\geq 0$ and that for any $\theta\in[0,1]$, $1-e^{-x}\lesssim x^\theta$ uniformly over $x\geq 0$, as well as the defining properties of $\chi_j$ one gets that for any $\kappa>0$
\begin{equ}\label{eq:HK-bound}
\|\varphi\|_{L^2}^2\lesssim |t-s|^{\kappa}|2^{-j(1-3\kappa)}.
\end{equ}
Applying Lemma \ref{lem:kolmogorov}, with $\kappa$ small enough (both therein and in \eqref{eq:HK-bound}) gives the required bound.

Very similarly, the bound \eqref{eq:HK-bound} holds for the function
\begin{equ}
\varphi(r,y)=\one_{r\geq 0}\Delta_j\big(\cP(t-r,\cdot)-\cP(s-r,\cdot)\big)(x-{\color{black}y}).
\end{equ}
This and \eqref{eq:hopefully-true}  implies \eqref{eq:kolmogorov}
with $V^\eps=Y^\eps$, which yields the tightness of $Y^\eps$.\qed


\subsection{A short recap of regularity structures}
We first provide a very brief recap of some of the main concepts in the theory of regularity structures introduced in \cite{H0}.
By no means do we (or can we) aim for completeness, but hopefully this recap helps the reader to follow the high level strategy.

Consider a family of noises (in our case, $\eta^\eps:=\eps^{(2/3)\beta_\gamma-\kappa}D^\gamma\xi^\eps$).
Consider a family of equations (in our case \eqref{eq:bigsystem} below) driven by one of these noises, involving a set of parameters (in our case $\hat C$ below). One writes a single analogous equation (in our case \eqref{eq:bigsystem-modelled} below) for functions that take values in a so-called regularity structure $\cT$. The parameters do not appear in the abstract equation.

The set $\cT$ is a vector space whose basis elements can be represented as the linear span of a finite family of rooted trees, where leafs (in symbol: $\Xi$) represent the noises, and edges (in symbol: $\cI$ or $\cI'$) represent convolution with a kernel $K$ or its spatial derivative. Here $K$ is essentially the heat kernel, modulo to a convenient truncation. A model $\PPi$ is a linear map from $\cT$ to the space of distributions (for us, actually always to smooth functions) that makes this representation precise in that $\PPi(\Xi)=\eta^\eps$ for some $\eps\in[0,1]$ and $\PPi(\cI\tau)=K\ast\PPi(\tau)$ for any $\tau\in \cT$ such that $\cI\tau\in\cT$. For each model there is map $\cR$ that maps (sufficiently nice) $\cT$-valued functions to real-valued ones. The abstract equation is built from this map, and crucially, both $\cR$ and the (unique) solution of the equation is a continuous function of $\PPi$, equipped with an appropriate metric.

If $\eps>0$ and the model further satisfies $\PPi(\tau\bar\tau)=(\PPi\tau)(\PPi\bar\tau)$ (in terms of trees, the multiplication means joining them at the root), this model is called the canonical model. The canonical model has the property that the $\cR$ of the solution of the abstract equation is the solution of the equation one started with, driven by $\eta^\eps$ and $\hat C=0$. 

Unfortunately, the canonical models typically do not converge as $\eps\to 0$. The so-called BPHZ models are built from algebraic deformations of the canonical models, that have two very nice properties: they do converge as $\eps\to0$, and the $\cR$ of the solution of the abstract equations still satisfies an equation of the kind that one started with, but with a different choice of parameters $\hat C$. Typically the deformation of the parameters diverge as $\eps\to 0$, this is the source of renormalisation.
By the convergence of the BPHZ models as $\eps\to 0$, by the continuity of the solution map and $\cR$, one obtains the convergence of the solutions of the renormalised equations as $\eps\to 0$.

\subsection{Proof of (b)}\label{sec:proof(b)}
The goal of the present section is to verify that the equation for the remainder $Z^\eps$ falls in the scope of the black box {\color{black}solution theory of regularity structures developed in} \cite{H0, CH, BHZ, BCCH}.
In the spirit of \cite{GHM} {\color{black}(see Eqs. (1.12)-(1.15) therein and the surrounding discussion), if the multiplication with powers of $\eps$ falls on a noise, one can view this as if the driving noise were more regular (without introducing any further operators like $\mathcal{E}$ in \cite{HQ})}.
{\color{black}This is made precise in Proposition \ref{prop:CM} below.}
 With this improved homogeneity assignment the \emph{super-regularity} assumption of \cite{CH}, that failed for the original equation \eqref{eq:KPZ}, is satisfied.

In principle the larger $\gamma$ is, the easier the problem becomes, but since larger $\gamma$ changes some power counting arguments slightly, we replace it in the regularity structure by $\gamma_0=\gamma\wedge1$. Take $\kappa>0$, which will be chosen to be sufficiently small.
Set 
\begin{equs}
\mu&=-3/2-{\gamma_0}-3\kappa+(2/3)\beta_{\gamma_0}.
\end{equs}
Take a smooth Gaussian noise $\eta$ with
regularity $\mu$ (in a sense made precise below). Assume that 
\begin{equ}
\hat\zeta(\cdot)=\int_{(-\infty,0]\times\T} \cP(-s,\cdot-y)\eta(s,y)\,ds\,dy\in C^{\mu+2}(\T).
\end{equ}
Take a parameter $c\in\R^5$. 
The noise $\eta$ (more precisely, its law) and $c$ determines a certain \emph{renormalisation constant} {\color{black}$\hat C\in\R^2$}, also detailed below.
Consider the system of equations 
\begin{equs}[eq:bigsystem]
(\dd_t -\Delta)\hat X &= \eta,\qquad &\hat X(0,x)&=\hat\zeta,
\\
(\dd_t -\Delta)\hat Y &= (\dd_x \hat X)^2-{\color{black}\hat C_1},\qquad &\hat Y(0,x)&=0,
\\
(\dd_t-\Delta)\hat Z&=c_1(\dd_x \hat Z)^2+2c_2(\dd_x \hat Z)(\dd_x \hat X)+2c_3(\dd_x \hat Z)(\dd_x \hat Y)
\\
&\qquad+2c_4(\dd_x \hat X)(\dd_x \hat Y)+c_5(\dd_x \hat Y)^2
-{\color{black}\hat C_2},\qquad &\hat Z(0,x)&=\psi(x).
\end{equs}
When emphasizing the dependence on $\eta$ and $c$, we write $\hat X(\eta)$, $\hat Y(\eta)$, $\hat Z(\eta,c)$, ${\color{black}\hat C}(\eta,c)$.
The sketch of the proof is as follows. Below we invoke the theory of regularity structures to verify that if the renormalisation constant ${\color{black}\hat C}$ is defined appropriately, then $\hat Z$ is continuous in both coordinates (so are $\hat X$ {\color{black}and} $\hat Y$, but that is easy to see without any deep theory).
Set, {\color{black}for any $\eps\in[0,1]$}
\begin{equs}[eq:choice]
\eta^\eps:=\eps^{(2/3)\beta_\gamma-\kappa}D^\gamma\xi^\eps,\quad c^\eps:&=(\eps^{\beta_\gamma},\eps^{(1/3)\beta_\gamma+\kappa},\eps^{(2/3)\beta_\gamma+2\kappa},\eps^{3\kappa},\eps^{(1/3)\beta_\gamma+4\kappa}).
\end{equs}
Clearly, 
$\hat X(\eta^\eps)=\eps^{(2/3)\beta_\gamma-\kappa}X^\eps$.
Furthermore, if  
we take ${\color{black}C_1^\eps}=\eps^{-(1/3)\beta_\gamma+2\kappa}{\color{black}\hat C_1}(\eta^\eps)$ in \eqref{eq:system0}, one also has
$\hat Y(\eta^\eps)=\eps^{(1/3)\beta_\gamma-2\kappa}Y^\eps$.
It will of course have to be verified that this choice of ${\color{black}C_1^\eps}$ satisfies the condition of Theorem \ref{thm:main2} (a).
Finally, if in \eqref{eq:Z} one takes ${\color{black}C^\eps_2=\hat C_2(\eta^\eps,c^\eps)}$, then $Z^\eps=\hat Z(\eta^\eps,c^\eps)$. Using the aforementioned continuity, $\hat Z(\eta^\eps,c^\eps)\to\hat Z(0,0)=\bar Z$ as claimed.



The system \eqref{eq:bigsystem} can be formulated via the regularity structure in \cite{Hoshino}: Indeed, the nonlinearities are inherited from the KPZ equation and the regularity of the noise is $\mu>-7/4$ if $\kappa$ is small enough.
In particular, in this regularity structure
the super-regularity condition \cite[Def.~2.28]{CH} (see also the second condition in \cite[Asn.~2.31]{HS})
is satisfied.

The construction of models for regularity structures is established in far-reaching generality in \cite{CH}.
We also invoke \cite{HS}, which gives a new proof of the main results of \cite{CH}, and which, more importantly for us, does not assume the singularity of the covariance of the noise to have its singularity located only at the space-time origin. This is useful in our setting, where the singularity is located on the whole line $\{t=0\}$. Following the notation in \cite[Def.~2.24]{HS}, denote $\mathrm{scal}:=-\gamma_0-\kappa+(2/3)\beta_{\gamma_0}$. Denote also by $H_{\parr}^\alpha(\R\times\T)$ the Bessel potential spaces with parabolic scaling equipped with the obvious norm.

\begin{proposition}\label{prop:CM}
There exist constants $(c_\eps)_{\eps\in(0,1]}$ with $c_\eps\to0$ such that for all $\eps\in(0,1]$ the Cameron-Martin norm $|\cdot|_\eps$ of $\eta^\eps$ satisfies
\begin{equ}
|\cdot|_\eps\leq c_\eps\|\cdot\|_{H_{\parr}^{\mathrm{scal}}(\R\times\T)}.
\end{equ}
\end{proposition}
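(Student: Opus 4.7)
Since $\eta^\eps=\eps^{(2/3)\beta_\gamma-\kappa}D^\gamma\xi^\eps=L_\eps\xi$ where $L_\eps\phi:=\eps^{(2/3)\beta_\gamma-\kappa}D^\gamma(\varrho^\eps\ast\phi)$ is a deterministic Fourier multiplier acting on $L^2(\R\times\T)$, the Cameron--Martin space of $\eta^\eps$ coincides with the range of $L_\eps$, with norm $|L_\eps\phi|_\eps=\|\phi\|_{L^2}$ modulo $\ker L_\eps$. Passing to Fourier coordinates and using the factorisation $\widehat{\varrho^\eps}(\tau,k)=\hat\rho(\eps^2\tau)\hat\rho(\eps k)$, both norms appearing in the proposition become weighted $L^2$ norms on $\R\times\Z$, and the desired inequality reduces to a pointwise comparison of the corresponding Fourier multipliers on the support of Cameron--Martin functions.

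I would carry out this comparison through a dyadic decomposition on the parabolic Fourier scale $\lambda=|\tau|^{1/2}+|k|$. On intermediate scales $1\lesssim\lambda\lesssim 1/\eps$, the mollifier factors $\hat\rho(\eps k)$ and $\hat\rho(\eps^2\tau)$ are bounded below by positive constants, so the comparison becomes purely algebraic: the defining identities $\alpha:=(2/3)\beta_\gamma-\kappa$ and $\mathrm{scal}=-\gamma_0-\kappa+(2/3)\beta_{\gamma_0}$, together with $\gamma\geq\gamma_0=\gamma\wedge1$ and $\beta_\gamma\geq\beta_{\gamma_0}$, are chosen so that at the critical scale $\lambda\sim 1/\eps$ the leading powers of $\eps$ exactly cancel, and the sub-leading contribution from the $\kappa$-perturbations yields a net factor of $\eps^{c\kappa}$ for some $c>0$. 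On high scales $\lambda\gg 1/\eps$, the Schwartz decay of $\hat\rho$ dominates any polynomial growth in the other factors and produces arbitrary negative powers of $\eps\lambda$; on very low scales $\lambda\lesssim 1$, the prefactor $\eps^{(2/3)\beta_\gamma-\kappa}$ is itself $o(1)$ and suffices. Taking the supremum over $\lambda$ gives $c_\eps=O(\eps^{c\kappa})\to 0$.

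The main obstacle is the razor-thin cancellation of leading exponents at the critical scale $\lambda\sim 1/\eps$: if one set $\kappa=0$, all powers of $\eps$ would cancel exactly, yielding at best a uniform (non-vanishing) bound. The perturbation parameter $\kappa$, present simultaneously in the prefactor of $\eta^\eps$ and in the definition of $\mathrm{scal}$, is precisely what opens the gap needed for $c_\eps\to0$. Secondary technical points include the handling of the zero spatial mode $k=0$, which is annihilated by $D^\gamma$ and therefore excluded from the Cameron--Martin space; the possible zeros of $\hat\rho$, which are circumvented by a standard choice of mollifier (for instance $\rho=\phi\ast\phi$ so that $\hat\rho\geq0$); and the reconciliation of the product form $\hat\rho(\eps^2\tau)\hat\rho(\eps k)$ of the covariance kernel with the isotropic parabolic weight $(1+|\tau|+|k|^2)^\mathrm{scal}$ of $H^\mathrm{scal}_\parr$, which requires carrying out the dyadic analysis separately in the time and space directions on the transition annulus where the two scalings meet.
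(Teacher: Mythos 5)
Your plan and the paper's proof arrive at the same destination but travel very different roads, and the difference is not cosmetic. The paper exploits the tensor-product structure of the mollified noise: it writes $\eta^\eps=(\rho^{\eps^2}\ast\xi_t)\otimes\big(\eps^{(2/3)\beta_\gamma-\kappa}D^\gamma\rho^\eps\ast\xi_x\big)$, observes that the Cameron--Martin space tensorises accordingly, controls the temporal factor's Cameron--Martin norm trivially by $L^2(\R)$, and reduces everything to a \emph{one-dimensional} Fourier-multiplier estimate for the spatial factor $f\mapsto\eps^{(2/3)\beta_\gamma-\kappa}D^\gamma\rho^\eps\ast f$, which is then pushed into the parabolic space via the embedding $L^2(\R)\otimes H^{\mathrm{scal}+\kappa}(\T)\subset H^{\mathrm{scal}+\kappa}_{\parr}(\R\times\T)$ (valid because the spatial index is negative). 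This factorisation completely dissolves the item you flag as a "secondary technical point" --- the mismatch between the product form $\hat\rho(\eps^2\tau)\hat\rho(\eps k)$ of the mollifier symbol and the isotropic parabolic weight $(1+|\tau|+|k|^2)^{\mathrm{scal}}$. Your plan instead works directly on the two-dimensional parabolic Fourier side with a dyadic decomposition, where that mismatch is not a side-issue: it is the chief extra labour your route incurs, it is not carried out in your sketch, and in the transition region $|\tau|^{1/2}\sim|k|\sim 1/\eps$ it genuinely requires care. The tensorisation is the clean trick worth adopting.

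Beyond that, I would double-check your power counting at the critical scale before trusting the sentence "the sub-leading contribution from the $\kappa$-perturbations yields a net factor of $\eps^{c\kappa}$". Note that $\kappa$ enters with the \emph{same} sign in both the prefactor exponent $\alpha=(2/3)\beta_\gamma-\kappa$ of $\eta^\eps$ and in $\mathrm{scal}=-\gamma_0-\kappa+(2/3)\beta_{\gamma_0}$; in the regime $\gamma\le 1$ (so $\gamma_0=\gamma$) one has the exact algebraic identity $\alpha-\gamma-\mathrm{scal}=0$, meaning that at $|k|\sim 1/\eps$ the two $\kappa$'s cancel each other and one is left with an $O(1)$ quantity, not $O(\eps^{c\kappa})$, for this pairing of exponents. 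Your statement "if one set $\kappa=0$, all powers would cancel exactly" is therefore misleading: they already cancel with $\kappa>0$. The true origin of the gain (and the precise Sobolev index it lands on --- note the paper's proof actually targets $H^{\mathrm{scal}+\kappa}$ in its intermediate step and invokes $\gamma_0\le\gamma$, which only gives extra room when $\gamma>1$) is more delicate than a one-line "$\kappa$ opens the gap" argument, and you should work it out carefully rather than assert it.

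Finally, two smaller remarks. The zero spatial mode $k=0$ and the zeros of $\hat\rho$ that you worry about are irrelevant for the inequality as stated: one only needs upper bounds on $|\hat\rho|$, not lower bounds, so no special choice of mollifier is required. And the paper's use of the tensorisation also automatically handles the temporal mollifier $\hat\rho(\eps^2\tau)$ by the single observation $\|\rho^{\eps^2}\ast g\|_{L^2(\R)}\le\|g\|_{L^2(\R)}$, so the time direction never enters the multiplier analysis at all.
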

\begin{proof}
Decomposing $\xi$ into $1$-dimensional white noises $\xi=\xi_t\otimes\xi_x$, one has the decomposition $\eta^\eps=(\rho^{\eps^2}\ast\xi_t)\otimes\big(\eps^{(2/3)\beta_\gamma-\kappa}D^\gamma\rho^\eps\ast\xi_x\big)$, and therefore the Cameron-Martin space also tensorises.
Since $\mathrm{scal}+\kappa<0$, one has $L^2(\R)\otimes H^{\mathrm{scal}+\kappa}(\T)\subset H^{\mathrm{scal}+\kappa}_{\parr}(\R\times\T)$.
Trivially, the Cameron-Martin norm of $(\rho^{\eps^2}\ast\xi_t)$ is controlled by the $L^2(\R)$ norm.
So it suffices to show that the map $f\mapsto \eps^{(2/3)\beta_\gamma-\kappa}D^\gamma\rho^\eps\ast f$ is uniformly bounded from $L^2(\T)$ to $H^{\mathrm{scal}+\kappa}(\T)$. This simply follows from $\gamma_0\leq \gamma$ and the inequality $\eps^{(2/3)\beta_\gamma-\kappa}\hat\rho^\eps(k)\lesssim |k|^{-(2/3)\beta_\gamma+\kappa}$.
\end{proof}
By the proposition, the noises $\eta^\eps$ satisfy the spectral gap inequality condition \cite[Def.~2.20]{HS} (see also \cite[Rem.~2.22]{HS})  with exponent $\mathrm{scal}$, uniformly in $\eps$ (in fact, with vanishing constant). Since $0>\mathrm{scal}>\mu+3/2$, the first condition in \cite[Asn.~2.31]{HS} is also satisfied.

The main results of \cite{CH,HS}
then imply that (with a fixed spatially symmetric truncation $K$ of the heat kernel as in \cite[Sec.~5]{H0}) the BPHZ lifts $\PPi^\eps$ of $\eta^\eps$ converge in probability
 in the natural topology of models (see \cite[Eq.~(2.17)]{H0}) to a model $\PPi^0$. In fact, $\PPi^0$ is the trivial model that is identically $0$ on the non-polynomial part of the regularity structure but we do not use anything from the limit other than that it exists.

For any of admissible model one can formulate the abstract counterpart of \eqref{eq:bigsystem} :
\begin{equs}[eq:bigsystem-modelled]
\tilde X&=\tilde \cP(\Xi)+{\color{black}E},
\\
\tilde Y &= \tilde\cP\big((\dd_x \tilde X)^2\big),
\\
\tilde Z&=\tilde\cP\Big(c_1(\dd_x \tilde Z)^2+2c_2(\dd_x \tilde Z)(\dd_x \tilde X)+2c_3(\dd_x \tilde Z)(\dd_x \tilde Y)
\\&\qquad\qquad+2c_4(\dd_x \tilde X)(\dd_x \tilde Y)+c_5(\dd_x \tilde Y)^2\Big)+\tilde\Psi,
\end{equs}
where $\tilde \cP$ is the abstract heat semigroup (in which for convenience we understood ``everything'' to be included: the multiplication with indicator of positive times, the abstract convolution with $K$, and the convolution with the smooth remainder, see \cite[Sec.~7.1]{H0}),
$E$ and $\tilde\Psi$ are the Taylor lifts of the functions $t,x\mapsto(\cP(t,\cdot)\ast\varphi)(x)$ with the choices $\varphi=\hat\zeta$ and $\varphi=\psi$, respectively. Deriviation and multiplication are all understood in the abstract sense.
For any model \eqref{eq:bigsystem-modelled} has a unique solution
\begin{equ}
(\tilde X,\tilde Y,\tilde Z)\in \cD^{\infty,\mu+2-\kappa}\times\cD^{\infty,2\mu+4-3\kappa}\times \cD^{{\color{black}2-\kappa},\theta}.
\end{equ}

By the main results of \cite{H0} these solutions are continuous both with respect to the model and with respect to $c$. So by making the choice \eqref{eq:choice} and then taking the corresponding BPHZ model, the resulting solutions $(\tilde X^\eps,\tilde Y^\eps,\tilde Z^\eps)$ converge as $\eps\to 0$. By the continuity of reconstruction operator $\cR$, the same holds for $\cR(\tilde X^\eps,\tilde Y^\eps,\tilde Z^\eps)$.
Since $c^0= 0$ {\color{black}(see \eqref{eq:choice})}, one has $\tilde Z^0=\tilde\Psi$ and thus $\cR\tilde Z^0=\bar Z$.

It remains to verify that $\cR(\tilde X^\eps,\tilde Y^\eps,\tilde Z^\eps)$ solves \eqref{eq:bigsystem} and therefore coincides with $(\hat X^\eps,\hat Y^\eps,\hat Z^\eps)$. 
In the equation for $\cR\tilde X^\eps$ there is no renormalisation so there is nothing to discuss. The equation for $\cR\tilde Y^\eps$ is easily seen to be renormalised by the constant ${\color{black}\hat C_1(\eta^\eps)}=\E(\partial_x K\ast\eta^\eps)^2$.
The constant ${\color{black}C_1^\eps}:=\eps^{-(1/3)\beta_\gamma+2\kappa}{\color{black}\hat C_1(\eta^\eps)}$ then satisfies the condition in (a): indeed, this is equivalent to saying that
\begin{equ}
\eps^{\beta_\gamma}\big(\E(\partial_x P_0\ast D^\gamma\xi^\eps)^2-\E(\partial_x K\ast D^\gamma\xi^\eps)^2\big)\to 0,
\end{equ}
which one can verify by observing that the difference of the two expectations above is bounded by $\eps^{(1-\gamma)\wedge 0-\kappa}$.
The fact that the equation for $\cR\tilde Z^\eps$ is also renormalised by a constant (as opposed to further functionals of the solution) is essentially verified in \cite{Hoshino}. One can also see this by noting that all trees in the regularity structure on which the BPHZ renormalisation character does not vanish belong to $T_0$, the set of binary trees without polynomial decorations. Indeed, any tree with negative homogeneity that does not belong to $T_0$ has either exactly one vertex with one outgoing edge or exactly one vertex with a polynomial decoration, both of which yield vanishing renormalisation due to the spatial antisymmetry of the functions $\partial_x K(t,x)$ and $x$ (see \cite[Prop.~2.3]{HairerVar} for a similar argument in the context of Remark \ref{rem:Martin}).
That the renormalisation function $\Upsilon^F[\tau]$ for quadratic $F$ and $\tau\in T_0$ is constant
follows immediately from the definition of
\cite[Eq.~(2.12)]{BCCH}.
Therefore the equation for $\cR\tilde Z^\eps$ is indeed renormalised by a constant, which we then denote by ${\color{black}\hat C_2}$.
With this choice of ${\color{black}\hat C}$ in \eqref{eq:bigsystem}, one has indeed  $\cR(\tilde X^\eps,\tilde Y^\eps,\tilde Z^\eps)=(\hat X^\eps,\hat Y^\eps,\hat Z^\eps)$.

This finishes the proof, with the caveat that the convergence $Z^\eps=\hat Z^\eps=\cR\tilde Z^\eps\to\cR\tilde Z^0=\bar Z$ obtained from \cite{H0} is local in time. The fact that this convergence also holds globally in time follows from the fact that we a priori know that both $Z^\eps$ and $\bar Z$ belong to $C([0,T],\cC^\theta(\T))$ a.s.\qed

{\color{black}\begin{remark}\label{rem:ini}
It also follows from the theory of regularity structures that $Z^\eps$ depends continuously on $\psi$, uniformly in $\eps$.
In particular, in the case $\gamma<1/2$ one can simply reverse the modification of the initial condition by considering \eqref{eq:Z} starting from $Z^\eps(0,x)=\psi(x)-\zeta^\eps(x)$ instead, since $\zeta^\eps$ converges in a space of functions with a positive H\"older exponent. In the $\gamma\geq 1/2$ case, $\psi-\zeta^\eps$ is no longer an admissible initial condition for $Z^\eps$. Nevertheless we expect that by enlarging the system of equations with the free solution of the heat equation starting from $\psi-\zeta^\eps$, it is still possible to reverse the modification of the initial condition.
To avoid tedious computations, we do not pursue this.
\end{remark}
}



\begin{thebibliography}{99}
\bibitem{BCCH}
Y.~Bruned, A.~Chandra, I.~Chevyrev, and M.~Hairer, \emph{Renormalising {SPDEs}
  in regularity structures}, Journal of the European Mathematical Society
  \textbf{23} (2020), no.~3, 869--947.

\bibitem{BHZ}
Y.~Bruned, M.~Hairer, and L.~Zambotti, \emph{Algebraic renormalisation of
  regularity structures}, Inventiones mathematicae \textbf{215} (2019), no.~3,
  1039--1156.

\bibitem{CH}
A.~{Chandra} and M.~{Hairer}, \emph{{An analytic BPHZ theorem for regularity
  structures}}, ArXiv e-prints (2016).

\bibitem{GHM}
Andris Gerasimovi{\v{c}}s, Martin Hairer, and Konstantin Matetski,
  \emph{Directed mean curvature flow in noisy environment}, Communications on
  Pure and Applied Mathematics (2023).

\bibitem{Mate}
M{\'a}t{\'e} Gerencs{\'e}r, \emph{Analytically weak solutions to stochastic heat
  equations with spatially rough noise},  (2024).

\bibitem{HairerKPZ}
M.~Hairer, \emph{Solving the {K}{P}{Z} equation}, Annals of Mathematics
  \textbf{2} (2013), no.~2, 559--664.

\bibitem{H0}
M.~Hairer, \emph{A theory of regularity structures}, Inventiones mathematicae
  \textbf{198} (2014), no.~2, 269--504.

\bibitem{HairerVar}
Martin Hairer, \emph{Renormalisation in the presence of variance blowup},
  (2024).

\bibitem{HS}
Martin Hairer and Rhys Steele, \emph{{The BPHZ Theorem for Regularity
  Structures via the Spectral Gap Inequality}}, Archive for Rational Mechanics
  and Analysis \textbf{248} (2024), no.~1.
  
 \bibitem{HQ}
 Martin Hairer and Jeremy Quastel, \emph{A class of growth models rescaling to KPZ}, Forum of Mathematics, Pi \textbf{6} (2018), e3.

\bibitem{Hoshino}
Masato Hoshino, \emph{{KPZ} equation with fractional derivatives of white
  noise}, Stochastics and Partial Differential Equations: Analysis and
  Computations \textbf{4} (2016), no.~4, 827--890.

\bibitem{roughSHE}
Yaozhong Hu, Jingyu Huang, Khoa L{\^{e}}, David Nualart, and Samy Tindel,
  \emph{Stochastic heat equation with rough dependence in space}, The Annals of
  Probability \textbf{45} (2017), no.~6B.

\bibitem{MW}
Jean-Christophe Mourrat and Hendrik Weber, \emph{Global well-posedness of the
  dynamic $\phi^{4}$ model in the plane}, The Annals of Probability \textbf{45}
  (2017), no.~4.

\bibitem{Nualart2005}
David Nualart and Giovanni Peccati, \emph{Central limit theorems for sequences
  of multiple stochastic integrals}, The Annals of Probability \textbf{33}
  (2005), no.~1.
\end{thebibliography}


\begin{acks}
M.G. is funded by the European Union (ERC, SPDE, 101117125). Views and opinions expressed
are however those of the author(s) only and do not necessarily reflect those of the European Union
or the European Research Council Executive Agency. Neither the European Union nor the granting
authority can be held responsible for them. 
F.T.   was supported by the Austrian Science Fund (FWF): P35428-N.
\end{acks}


\end{document}